\DeclareSymbolFont{symbolsC}{U}{pxsyc}{m}{n}
\DeclareMathSymbol{\medcircle}{\mathbin}{symbolsC}{7}
\crefname{equation}{}{} 
\colorlet{refkey}{orange!20}
\colorlet{labelkey}{blue!30}
\numberwithin{equation}{section}
\newtheorem{theorem}{Theorem}[section]
\newtheorem{proposition}[theorem]{Proposition}
\newtheorem{lemma}[theorem]{Lemma}
\newtheorem{claim}[theorem]{Claim}
\newtheorem{corollary}[theorem]{Corollary}
\newtheorem*{question*}{Question}
\theoremstyle{definition}
\newtheorem{definition}[theorem]{Definition}
\newtheorem*{definition*}{Definition}
\theoremstyle{remark}
\newcommand{\mb}{\mathbb}
\newcommand{\mbf}{\mathbf}
\newcommand{\mc}{\mathcal}
\newcommand{\on}{\operatorname}
\let\originalleft\left
\let\originalright\right
\renewcommand{\left}{\mathopen{}\mathclose\bgroup\originalleft}
\renewcommand{\right}{\aftergroup\egroup\originalright}
\newif\ifpublic
\newcommand{\ignore}[1]{}
\title{Rainbow Common Graphs Must Be Forests}
\author{Yihang Sun}
\address{Massachusetts Institute of Technology, Cambridge, MA, USA}
\email{kimisun@mit.edu}
\begin{document}

\maketitle
\begin{abstract}
We study the rainbow version of the graph commonness property: a graph $H$ is \emph{$r$-rainbow common} if the number of rainbow copies of $H$ (where all edges have distinct colors) in an $r$-coloring of edges of $K_n$ is maximized asymptotically by independently coloring each edge uniformly at random. $H$ is \emph{$r$-rainbow uncommon} otherwise. We show that if $H$ has a cycle, then it is $r$-rainbow uncommon for every $r$ at least the number of edges of $H$. This generalizes a result of Erd\H{o}s and Hajnal, and proves a conjecture of De Silva, Si, Tait, Tun\c{c}bilek, Yang, and Young. 
\end{abstract}
\section{Introduction}
\subsection{History}
In extremal graph theory, a graph $H$ is \emph{common} if the number of copies of $H$ in any graph $n$ vertex graph $G$ and its complement $\overline{G}$ is minimized asymptotically by the Erd\H{o}s-R\'{e}nyi random graph $\mb{G}(n, 1/2)$. In other words, the minimum number of monochromatic copies of $H$ in any $2$-edge coloring of $K_n$ is asymptotically achieved by coloring each edge independently uniformly at random.
Commonness is extensively studied due to its connection to other homomorphism density inequalities, including the Sidorenko conjecture. In particular, Sidorenko graphs are common \cite{lovasz-book}.

A similar question is asked in anti-Ramsey theory: instead of minimizing the number of monochromatic $H$, we maximize the number of \emph{rainbow} copies of $H$ where all edges have distinct colors \cite{erdos-hajnal,Balogh,desilva}.
\begin{definition}\label{def:common}
For a graph $H$ and $r\in \mb{N}$, we say that $H$ is \emph{$r$-rainbow common} if the maximum number of rainbow copies of $H$ in an $r$-coloring of edges of $K_n$ is achieved asymptotically by coloring each edge independently with a uniform random color. Otherwise, $H$ is \emph{$r$-rainbow uncommon}.

We say that $H$ is \emph{rainbow common} (resp. \emph{rainbow uncommon}) if it is $r$-rainbow common (resp. $r$-rainbow uncommon) for all $r\ge e(H)$, where $e(H)$ denotes the number of edges of $H$.
\end{definition}
All asymptotics are up to a $1+o(1)$ factor as $n\to\infty$. 
This definition of $r$-rainbow commonness is called $r$-anti-common in \cite{desilva} and $r$-rainbow uncommon graphs are called not $r$-anti-common.

If $r<e(H)$, the condition in \cref{def:common} is trivial as there is no rainbow $H$ by the pigeonhole principle. Note that $H$ can be $r$-rainbow common for some $r$ and $s$-rainbow uncommon for $s\ne r$, in which case it is neither rainbow common nor uncommon. However, no such $H$ is known.

It is an old result of Erd\H{o}s and Hajnal \cite{erdos-hajnal} that $K_3$ is $3$-rainbow uncommon: there is a coloring that beats the $2/9$ density of rainbow $K_3$'s obtained by the uniform random $3$-coloring. Erd\H{o}s and S\'{o}s posed the question to determine the maximum density \cite{rodl}. With a flag algebra approach, \cite{Balogh} settled this question and determined that the maximum density is $2/5$ asymptotically.

More recently, \cite{desilva} showed that disjoint unions of stars $K_{1, s}$ are rainbow common and introduced the iterated blowup technique to show that $K_s$ is $\binom{s}{2}$-rainbow uncommon for all $s\in\mb{N}$, thereby generalizing the result of Erd\H{o}s and Hajnal. It is also conjectured that, for every $s\ge 3$, the cycle graph with $s$ edges is $s$-rainbow uncommon and path graph with $s$ edges is $s$-rainbow common \cite{desilva}.

We can study a similar question for solutions to an equation. We seek to $r$-color $\{1, 2, \dots, n\}$ to maximize the number of rainbow solutions $(x_1, \dots, x_d)$ to some equation $\sum_{i=1}^d a_ix_i=0$. The Schur equation $x_1+x_2=x_3$ is studied in \cite{wong} while Sidon equation $x_1+x_2=x_3+x_4$ is studied in \cite{fox-eq,Taranchuk}. 
\subsection{Main results}
In this paper, we focus on the graph theoretic setting.
We extend the result of Erd\H{o}s and Hajnal \cite{erdos-hajnal} and settle the cycle conjecture in \cite{desilva} with a much stronger statement. 
\begin{theorem}\label{thm:main}
If graph $H$ contains a cycle, then it is rainbow uncommon.
\end{theorem}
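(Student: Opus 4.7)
The plan is to produce, for every $r\geq e(H)$, an explicit $r$-coloring of $K_n$ whose asymptotic density of rainbow copies of $H$ strictly exceeds the density $r(r-1)\cdots(r-e(H)+1)/r^{e(H)}$ attained by the uniform random coloring. Following De Silva, Si, Tait, Tun\c{c}bilek, Yang, and Young, the construction will be an \emph{iterated blowup} of a carefully chosen base coloring: starting from an $r$-coloring $c_0$ of some small complete graph $K_N$, I would form $c_k$ on $[N]^k$ by coloring the edge between $\vec u$ and $\vec v$ with $c_0(u_\ell, v_\ell)$, where $\ell$ is the first coordinate on which $\vec u$ and $\vec v$ differ.

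The first step is to select a base $c_0$ whose rainbow $H$-density on $K_N$ strictly exceeds the random density. Two natural candidates are a proper edge coloring of $K_N$, which makes many small cyclic subgraphs (notably every triangle) rainbow and tends to work well when $H$ contains an odd cycle, and an algebraic coloring such as $c_0(i,j) = i+j \pmod r$, which imposes linear dependencies among the $e(H)$ edge colors around any cycle of $H$ and is particularly effective when $H$ contains an even cycle.

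The second step is to count rainbow copies of $H$ in $c_k$. Each such copy is described by a \emph{depth profile} recording, for each edge of $H$, the coordinate index it uses; the ``first disagreement'' structure forces these profiles to satisfy an ultrametric condition. The cycle hypothesis on $H$ is essential here: two same-depth edges belonging to a common cycle of $H$ would be forced to share a color, so the ultrametric condition eliminates many profiles and the rainbow count is dominated by the flat (all-same-depth) profile. A standard geometric-series computation then shows that, for $N$ sufficiently large and $k\to\infty$, the limiting rainbow density of $c_k$ matches the rainbow $H$-density of $c_0$ on $K_N$.

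The main obstacle I anticipate is the first step: producing, uniformly over cyclic $H$ and over $r\geq e(H)$, a base $c_0$ whose rainbow $H$-density on $K_N$ strictly beats the random density. Certain regimes are immediate (e.g.\ $H = K_3$ with a proper edge coloring of $K_r$, or $H$ containing an even cycle with the algebraic coloring on a large $K_N$). The delicate case arises when $H$ has only odd cycles and $r$ is close to $e(H)$: then the algebraic coloring ties random (because the incidence matrix of an odd cycle is invertible modulo $r$ for most $r$) and a naive proper-edge-coloring base need not suffice. Resolving this case---most likely by enlarging $N$ substantially and invoking a more refined combinatorial or probabilistic coloring---is the technical heart of the proof.
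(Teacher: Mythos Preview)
Your proposal is not a proof but a plan with an explicitly acknowledged gap, and that gap is exactly the hard part of the problem. You reduce the theorem to finding, for every cyclic $H$ and every $r\ge e(H)$, a base $r$-coloring of some $K_N$ whose rainbow-$H$ density strictly beats random, and then you write that ``resolving this case \ldots\ is the technical heart of the proof'' without resolving it. That is the whole difficulty: De Silva et al.\ already had the iterated blowup machinery and could handle only cliques with it; the paper itself relegates iterated blowup to an appendix where it treats $C_4$ and $C_5$ by computer search and nothing more. Your two candidate base colorings (proper edge colorings, and the algebraic $i+j\bmod r$ coloring) do not cover all cases, as you yourself point out, and nothing in the proposal explains how to close the remaining range. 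So as written there is no proof.

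For comparison, the paper's argument takes a completely different route. It works analytically with coloring graphons, perturbing the uniform coloring $\mathbf 1/r$ by setting $W_i=1/r+\varepsilon\,\sigma(i)f(x,y)$, where $f$ is the $\pm 1$ ``bipartite'' kernel and $\sigma$ takes two values $1/k$ and $-1/(r-k)$. Expanding $t(\mathrm{rb}\,H,W)-t(\mathrm{rb}\,H,\mathbf 1/r)$ in powers of $\varepsilon$ and using that $f$ integrates to zero on any leaf, the lowest-order surviving term is indexed by girth cycles $C_s\subset H$, and its sign is governed by an explicit polynomial $F(r,s,k)$. The only genuinely delicate step is the elementary inequality $F(r,s,k)>0$ for some $k\in[r-1]$, which the paper verifies by taking $k=2$ when $s\ge 4$ and $k=r-1$ when $s=3$. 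This perturbative approach entirely sidesteps the search for a good base coloring; it produces a coloring only infinitesimally far from uniform, which is why it succeeds uniformly in $H$ and $r$.

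A secondary issue: in your Step~2 you claim that ``two same-depth edges belonging to a common cycle of $H$ would be forced to share a color.'' That is not correct; same-depth edges are colored by the base coloring on the relevant coordinate and can certainly receive different colors. The iterated-blowup count (see the paper's \cref{prop:iterated-belowup}) does not rely on any such claim.
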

Since graphs without cycles must be forests, we the following result immediately follows.
\begin{corollary}\label{cor:forest}
If graph $H$ is $r$-rainbow common for any $r\ge e(H)$, then $H$ must be a forest.
\end{corollary}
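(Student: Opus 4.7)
The plan is to derive \cref{cor:forest} as an immediate consequence of \cref{thm:main} through a contrapositive argument. By \cref{def:common}, a graph $H$ is rainbow uncommon exactly when it is $r$-rainbow uncommon for every $r\ge e(H)$, and being $r$-rainbow common is by definition the negation of being $r$-rainbow uncommon for that specific $r$. Hence, if there exists some $r\ge e(H)$ for which $H$ is $r$-rainbow common, then $H$ fails the universal condition defining rainbow uncommonness, so $H$ is not rainbow uncommon.

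The contrapositive of \cref{thm:main} states that if $H$ is not rainbow uncommon, then $H$ contains no cycle. Any graph with no cycle is by definition a forest, so the conclusion follows by chaining the two implications.

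Since this is a purely formal manipulation of definitions applied to the main theorem, there is no substantive obstacle here; the work has already been done in establishing \cref{thm:main}. The only care needed is to correctly track the quantifier in the definition of rainbow uncommon (a universal condition over $r\ge e(H)$) against the existential flavor of the corollary's hypothesis, reading ``for any $r\ge e(H)$'' as ``for some $r\ge e(H)$.''
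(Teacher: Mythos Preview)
Your argument is correct and matches the paper's approach: the paper simply states that \cref{cor:forest} ``immediately follows'' from \cref{thm:main} together with the fact that a graph without cycles is a forest, and your contrapositive spelling-out of this is exactly right. Your attention to the quantifier issue (reading ``for any $r$'' existentially, so that the hypothesis negates the universal condition defining rainbow uncommonness) is the only non-trivial point, and you handle it correctly.
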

\subsection{Organization}
In \cref{sec:graphon}, we prove \cref{thm:main} using a graphon perturbation technique inspired by that of \cite{fox,lovasz} to study local Sidorenko properties. 
\subsection{Notation}
Throughout the paper, we consider simple graphs $G$ with vertex set $V(G)$ and edge set $E(G)$. Let $v(G)$ and $e(G)$ denote the size of $V(G)$ and $E(G)$, respectively. Let $[n]:=\{1, 2, \dots, n\}$. Let $(n)_k:=k!\binom{n}{k}$ be the falling factorial. Let binomial coefficient $\binom{n}{k}=0$ if $k>n$.
\subsection*{Acknowledgements}
This research was partially conducted during the Polymath Jr. REU 2022. The author extends profound gratitude towards Gabriel Elvin, for his mentorship, and to professor Adam Sheffer, for directing the program. The author thanks Ashwin Sah, Mehtaab Sawhney, and Yufei Zhao for suggesting the graphon perturbation technique in \cref{sec:graphon}. The author also thanks Milan Haiman, Andrew Huang, and Neha Pant for insights and helpful conversations.
\section{Preliminaries}\label{sec:prelim}
\subsection{Graphons}
In this section, we introduce the notions of graphons and graph homomorphism densities that we need for \cref{thm:main}. We will follow \cite{gtac,lovasz-book} where complete expositions are given.
\begin{definition}\label{def:graphon}  
A \emph{graphon} is a measurable function $W:[0, 1]^2\to [0, 1]$ where $W(x, y)=W(y, x)$.
\begin{itemize}
    \item We define the \emph{associated graphon} $W_G$ of a graph $G$: partition $[0, 1]$ into $v(G)$ equal-length intervals $I_1, \dots , I_{v(G)}$. For $(x, y)\in I_i\times I_j$, define $W_G = \mbf{1}\{(i, j)\in E(G)\}$.
    \item For a graphon $W$, the \emph{$W$-random graph} $\mb{G}(n, W)$ on $[n]$ is defined by sampling independently $x_1, \dots, x_n\in [0, 1]$ and putting an edge $(i, j)$ with probability $W(x_i, x_j)$ independently.
\end{itemize}
For a graphs $G, H$ and graphon $W$, define the \emph{$H$-homomorphism density in $G$ and $W$} as
\[ t(H, G) = \frac{\on{hom}(H, G)}{v(G)^{v(H)}}\quad \text{and}\quad t(H, W) = \int_{[0, 1]^{v(H)}} \prod_{(u, v)\in E(H)} W(x_u, x_v)\prod_{v\in V(H)} dx_v,\]
where $\on{hom}(H, G)$ is the number of graph homomorphisms from $H$ to $G$.
\end{definition}
For example, the Erd\H{o}s-R\'{e}nyi random graph $\mb{G}(n, p)$ is $W$-random graph $\mb{G}(n, W)$ where $W$ is the constant graphon $W(x, y)=p\in [0,1]$. One can similarly view stochastic block models as graphons.

In this language, a graph $H$ is Sidorenko if $t(H, W)\ge t(K_2, W)^{e(H)}$ for every graphon $W$ and $H$ is common if $t(H, W)+t(H, 1-W)\ge 2^{1-e(H)}$ for every $W$ \cite{gtac,lovasz-book}.
Note that $t(H, G)$ is the probability that a uniform random map from $V(H)\to V(G)$ is a graph homomorphism. 
The following simple observation shows that the two definitions of homomorphism densities coincide for graphs.
\begin{lemma}\label{lem:hom-density-eq}
$t(H, W_G)=t(H, G)$ for all graphs $G, H $ and graphon $W_G$ associated to $G$. 
\end{lemma}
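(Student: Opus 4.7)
The plan is to unfold the definition of $t(H, W_G)$ as an integral over $[0,1]^{V(H)}$ and partition the domain according to which interval $I_1, \ldots, I_{v(G)}$ each coordinate lands in. Since the intervals $I_1, \ldots, I_{v(G)}$ form a measurable partition of $[0,1]$, the product set $\prod_{u \in V(H)} I_{\phi(u)}$ ranges over a measurable partition of $[0,1]^{V(H)}$ indexed by functions $\phi : V(H) \to V(G)$. On each cell of this partition, the value of $W_G(x_u, x_v)$ is constant: by \cref{def:graphon} it is exactly $\mathbf{1}\{(\phi(u), \phi(v)) \in E(G)\}$.

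So the second step is to rewrite
\[
t(H, W_G) = \sum_{\phi : V(H) \to V(G)} \int_{\prod_u I_{\phi(u)}} \prod_{(u,v) \in E(H)} \mathbf{1}\{(\phi(u), \phi(v)) \in E(G)\} \prod_{v \in V(H)} dx_v.
\]
Since each interval $I_i$ has length $1/v(G)$, the measure of $\prod_u I_{\phi(u)}$ is $v(G)^{-v(H)}$, independent of $\phi$. The integrand is constant on this cell, equal to $1$ precisely when every edge $(u,v) \in E(H)$ is sent to an edge of $G$, i.e., when $\phi$ is a graph homomorphism, and $0$ otherwise.

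Therefore the sum collapses to
\[
t(H, W_G) = v(G)^{-v(H)} \sum_{\phi : V(H) \to V(G)} \mathbf{1}\{\phi \text{ is a homomorphism}\} = \frac{\on{hom}(H, G)}{v(G)^{v(H)}} = t(H, G),
\]
which is the desired identity. I do not expect any real obstacle here — the only subtlety is justifying the partition-of-the-integral step, which follows immediately from the countable additivity of Lebesgue measure applied to the finite partition $\{I_i\}$ and Tonelli (all integrands are nonnegative and bounded). The step deserving the most care is simply keeping track of the fact that the edge indicator on the cell $\prod_u I_{\phi(u)}$ truly depends only on $\phi$, so that the product of indicators over $E(H)$ reduces to the homomorphism indicator for $\phi$.
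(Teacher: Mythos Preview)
Your argument is correct and is the standard proof of this identity. The paper itself does not supply a proof of this lemma: it is introduced as a ``simple observation'' and left unproved, so there is nothing substantive to compare against. Your partition of $[0,1]^{V(H)}$ into the cells $\prod_{u} I_{\phi(u)}$ and the reduction of the integrand on each cell to the homomorphism indicator is exactly the expected computation; the only cosmetic point is that the intervals $I_i$ may share endpoints, but this is a measure-zero set and does not affect the integral.
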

The following statement captures the viewpoint that graphons are limits of graphs in the context of homomorphism densities. This uses the notion of left convergence of graphs to graphons.
\begin{proposition}[{\cite[Theorem 4.4.2]{gtac}}]\label{prop:homom-density}
Let $W$ be a graphon. For each $n\in\mb{N}$, let $G_n\sim \mb{G}(n, W)$. Then, $G_n$ left converges to $W$ almost surely, i.e. for all graphs $H$, $t(H, G_n)\to t(H, W)$ as $n\to\infty$.
\end{proposition}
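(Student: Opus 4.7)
The plan is to establish $t(H,G_n)\to t(H,W)$ for each fixed $H$ by first showing convergence in expectation and then using concentration to promote this to almost-sure convergence. For the first moment, I would write $\on{hom}(H,G_n)=\sum_{\phi\colon V(H)\to[n]} \mbm{1}\{\phi\text{ is a homomorphism}\}$ and split the sum into injective and non-injective maps. For an injective $\phi$, conditioning on the latent points $x_1,\dots,x_n$ gives conditional probability $\prod_{(u,v)\in E(H)} W(x_{\phi(u)},x_{\phi(v)})$ of being a homomorphism, and since $\phi$ is injective the relevant $x_{\phi(v)}$ are independent uniforms, so Fubini yields unconditional expectation $t(H,W)$. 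There are $(n)_{v(H)}=n^{v(H)}-O(n^{v(H)-1})$ injective maps, while the non-injective ones number $O(n^{v(H)-1})$ and each contributes at most $1$. Dividing by $n^{v(H)}$ gives $\mb{E}[t(H,G_n)]=t(H,W)+O(1/n)$.

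For concentration, I would realize $G_n$ as a deterministic function of the independent uniform random variables $x_1,\dots,x_n$ (the latent points) and auxiliary coins $U_{ij}$ for $1\le i<j\le n$ (uniform on $[0,1]$), declaring edge $(i,j)$ present iff $U_{ij}\le W(x_i,x_j)$. Changing a single $x_i$ affects only homomorphic images using vertex $i$, of which there are $O(n^{v(H)-1})$, so after dividing by $n^{v(H)}$ the bounded-difference constant is $O(1/n)$; changing a single $U_{ij}$ affects only $O(n^{v(H)-2})$ maps, giving constant $O(1/n^2)$. Summing squares yields $n\cdot O(n^{-2})+\binom{n}{2}\cdot O(n^{-4})=O(1/n)$, so McDiarmid's bounded-differences inequality produces
\[ \Pr\bigl(|t(H,G_n)-\mb{E}[t(H,G_n)]|>\varepsilon\bigr) \le 2\exp(-\Omega(n\varepsilon^2)). \]

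This tail bound is summable in $n$ for every fixed $\varepsilon>0$, so the Borel--Cantelli lemma gives $t(H,G_n)\to t(H,W)$ almost surely for this single $H$, and intersecting over the countable collection of all finite graphs $H$ preserves the almost-sure event. The main obstacle is the careful bookkeeping of bounded-difference constants and the coupling that recasts the model as a function of independent coordinates (together with a brief measurability check so that the integral defining $t(H,W)$ and the edge-coin comparison are meaningful); once these are in place, the first-moment computation together with McDiarmid's inequality makes the argument quite robust.
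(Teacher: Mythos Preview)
The paper does not prove this proposition; it merely quotes it from \cite[Theorem~4.4.2]{gtac} as a known black box, so there is no in-paper argument to compare against. Your sketch is correct and is in fact the standard proof of this sampling lemma as presented in the cited reference: compute $\mb{E}[t(H,G_n)]$ via the injective/non-injective split, then apply McDiarmid's inequality to the representation of $G_n$ as a function of the independent coordinates $(x_1,\dots,x_n,(U_{ij})_{i<j})$, and conclude with Borel--Cantelli and a countable intersection over all $H$. Your bounded-difference constants and the resulting $\exp(-\Omega(n\varepsilon^2))$ tail are right.
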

An alternative view point uses the notion of convergence in \emph{cut metric} $\delta_{\square}$ to view graphons as the completion of the space of graphs with respect to $\delta_{\square}$. We refer to \cite{gtac,lovasz-book,szegedy} for more details. 

\subsection{Colorings}
We can view graphs as two colorings of complete graphs. This perspective allows us to naturally extend the notions of graphons and homomorphism densities to edge colorings by considering the graphons associated to the induced subgraph spanned by each color.
\begin{definition}\label{def:coloring-graphon}
For $r\in \mb{N}$, we say that $W=(W_1, \dots, W_r)$ is an \emph{$r$-coloring graphon} if $W_i$ is a graphon for every $i\in [r]$ and, for every $x, y\in [0, 1]$,
\begin{equation}\label{eq:coloring-cond}
     \sum_{i=1}^rW_i(x, y)=1.
\end{equation}
We define the \emph{associated $r$-coloring graphon} of a coloring $\phi:E(K_n)\to [r]$ as $W_\phi = ((W_\phi)_1,\dots, (W_\phi)_r)$ where $(W_\phi)_i$ is the associated graphon of graph $G_i = (V(G), \phi^{-1}(i))$ spanned by edges of color $i$.
\end{definition}
Similar to the Erd\H{o}s-R\'{e}nyi random graph $\mb{G}(n, p)$, the uniform random $r$-coloring can be represented by the $r$-coloring graphon $\mbf{1}/r = (1/r, \dots , 1/r)$. 
We now define an analog of homomorphism densities to count the density of rainbow copies of $H$ in an $r$-coloring graphon $W$ with $r\ge e(H)$.
\begin{definition}\label{def:rainbow-density}
Given a graph $H$ and an $r$-coloring graphon $W=(W_1, \dots, W_r)$, let $\mc{H}$ be the set of injections from $E(H)$ to $[r]$. 
We define the \emph{rainbow homomorphism density of $H$ in $W$} as
\begin{equation}\label{eq:t-coloring}
t(\on{rb}H, W) := \sum_{h\in \mc{H}}\int_{[0, 1]^{v(H)}} \left(\prod_{(u, v)\in E(H)}W_{h(u, v)}(x_u, x_v)\right)\prod_{v\in V(H)}dx_v.
\end{equation}
\end{definition}
Note that $h(u, v)$ is the color of edge $(u, v)$, which occurs with probability $W_h$. 
The condition of $\mc{H}$ that $h$ is injective corresponds exactly to the copy we picked out being rainbow.
The analogs of \cref{lem:hom-density-eq,prop:homom-density} hold for rainbow graph density. For brevity, we only state and prove what we need for \cref{thm:main}. The following lemma is analogous to $W$-random graphs.

\begin{lemma}\label{lem:assoc-coloring}
For any $r, n\in\mb{N}$, given an $r$-coloring graphon $W=(W_1, \dots, W_r)$, we define a random $r$-coloring $\phi\sim \mb{G}(n, W)$ of edges of $K_n$ with vertex set $[n]$ as follows.
\begin{enumerate}
    \item Sample independent $x_1, x_2, \dots , x_n\sim \on{Unif}([0, 1])$. 
    \item For every edge $(u, v)\in E(K_n)$, independently sample $\phi(u, v)\in [r]$ where $\phi(u, v)=i$ with probability $W_i(u, v)$. Color edge $(u, v)$ by color $\phi(u, v)$.
\end{enumerate}
Then, the probability a uniform random copy of $H$ is rainbow under coloring $\phi$ is $t(\on{rb}H, W)$.
\end{lemma}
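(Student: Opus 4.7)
The plan is to compute the rainbow probability directly by conditioning on the i.i.d.\ uniforms $x_1, \ldots, x_n$ drawn in Step~1 and exploiting the conditional independence of edge colors in Step~2. A uniform random copy of $H$ in $K_n$ can be encoded as a uniform random injection $\sigma : V(H) \to [n]$ (each unlabeled copy corresponds to $|\on{Aut}(H)|$ such injections, so the normalization is irrelevant). With this encoding, the copy $\sigma$ is rainbow under $\phi$ exactly when the tuple $(\phi(\sigma(u), \sigma(v)))_{(u,v) \in E(H)}$ has pairwise distinct coordinates, i.e., equals $h \circ \iota$ for some injection $h \in \mc{H}$, where $\iota$ is the identity labeling of $E(H)$.

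Given $(x_1, \ldots, x_n)$ and $\sigma$, the colors assigned in Step~2 to distinct edges $(\sigma(u), \sigma(v))$ are mutually independent by construction, with marginal $\mb{P}[\phi(\sigma(u), \sigma(v)) = i] = W_i(x_{\sigma(u)}, x_{\sigma(v)})$. Summing over all injections $h \in \mc{H}$ (which parametrize the rainbow outcomes), the conditional probability that $\sigma$ is rainbow is
\[
\sum_{h \in \mc{H}} \prod_{(u,v) \in E(H)} W_{h(u,v)}\bigl(x_{\sigma(u)}, x_{\sigma(v)}\bigr).
\]

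Finally, I would take expectation over $\sigma$ and $(x_1, \ldots, x_n)$, interchanging with the finite sum over $h$ via Fubini-Tonelli (all integrands are non-negative and bounded). The key observation is that for any fixed $\sigma$ the vector $(x_{\sigma(v)})_{v \in V(H)}$ has the same distribution as $v(H)$ i.i.d.\ $\on{Unif}([0,1])$ variables: this is immediate from exchangeability of the $x_i$, since $\sigma$ merely selects an ordered subset of $v(H)$ of the $n$ i.i.d.\ uniforms. Relabeling those selected coordinates as $(x_v)_{v \in V(H)}$ yields exactly the integral defining $t(\on{rb}H, W)$ in \cref{def:rainbow-density}. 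The argument is essentially a direct calculation; the only subtlety worth pausing on is the distributional identity in the last step, which reduces entirely to exchangeability of i.i.d.\ random variables.
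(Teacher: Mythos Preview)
Your proposal is correct and follows essentially the same approach as the paper's proof: condition on the sampled uniforms, use the conditional independence of edge colors to write the rainbow probability as a sum over $h\in\mc{H}$ of products of $W_{h(u,v)}$ values, then integrate out the uniforms. Your version is simply more explicit about the role of the injection $\sigma$ and the exchangeability step, whereas the paper compresses this into a two-sentence argument.
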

\begin{proof}
Fix a uniform random copy $H$, we compute the probability that it is rainbow. The rainbow edge colorings of $H$ are precisely $\mc{H}$. For each $h\in\mc{H}$, the probability that $\phi$ colors edge $(u, v)$ of $H$ by color $h(u, v)$ is precisely $W_{h(u, v)}(x_u, x_v)$. By independence of edges $(u, v)$, we take the product and take the expectation over $x_1, \dots, x_d$ independently uniform on $[0, 1]$ to prove \cref{lem:assoc-coloring}.
\end{proof}
\begin{corollary}\label{cor:reduction}
For graph $H$ and $r\ge e(H)$, $H$ is $r$-rainbow uncommon if there exists an $r$-coloring graphon $W=(W_1, \dots, W_r)$ such $t(\on{rb}H, W) > t(\on{rb}H, \mbf{1}/r) = r^{-e(H)}(r)_{e(H)}$.
\end{corollary}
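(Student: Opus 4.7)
The plan is to apply \cref{lem:assoc-coloring} as a transfer principle from graphons to discrete colorings, then invoke the probabilistic method. Let $W$ denote the hypothesized graphon and set $\epsilon := t(\on{rb}H, W) - r^{-e(H)}(r)_{e(H)} > 0$. For each large $n$, sample $\phi \sim \mb{G}(n, W)$ as in \cref{lem:assoc-coloring}; that lemma states that the probability a uniform random map $V(H) \to [n]$ is a rainbow homomorphism under $\phi$ equals $t(\on{rb}H, W)$, so the expected number of such rainbow maps is $n^{v(H)} \cdot t(\on{rb}H, W)$.

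By the probabilistic method, some deterministic $r$-coloring $\phi^*$ of $E(K_n)$ witnesses at least this many rainbow maps. Since only $O(n^{v(H)-1})$ maps $V(H) \to [n]$ are non-injective, $\phi^*$ contains at least
\[
\frac{1}{|\on{Aut}(H)|}\bigl(n^{v(H)} \cdot t(\on{rb}H, W) - O(n^{v(H)-1})\bigr)
\]
rainbow copies of $H$. Applying \cref{lem:assoc-coloring} with $W$ replaced by $\mbf{1}/r$, which realizes the uniform random $r$-coloring, gives expected rainbow copy count $(n)_{v(H)} \cdot r^{-e(H)}(r)_{e(H)} / |\on{Aut}(H)|$. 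Subtracting, $\phi^*$ surpasses this by $\Omega(n^{v(H)})$ thanks to the positive constant $\epsilon$. Hence the maximum of rainbow copies of $H$ over all $r$-colorings of $E(K_n)$ exceeds the uniform random expectation asymptotically by a $1 + \Omega(1)$ factor, contradicting $r$-rainbow commonness and establishing that $H$ is $r$-rainbow uncommon.

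The only mildly subtle bookkeeping is the conversion between the homomorphism-style count underlying $t(\on{rb}H, W)$, which formally sums over all maps from $V(H) \to [n]$ including non-injective ones, and the genuine count of rainbow copies of $H$ in $K_n$. This is routine because non-injective maps contribute only a lower-order $O(n^{v(H)-1})$ correction that is absorbed into the $o(1)$ error term. No real obstacle arises: \cref{lem:assoc-coloring} already encapsulates the analytic transfer from graphons to colorings, and the remainder is a standard application of the probabilistic method.
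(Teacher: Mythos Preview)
Your proof is correct and follows essentially the same route as the paper: invoke \cref{lem:assoc-coloring} to compute the expected number of rainbow copies under $\phi\sim\mb{G}(n,W)$, apply the probabilistic method to extract a deterministic coloring, and compare against the uniform baseline. The only minor wrinkle is that \cref{lem:assoc-coloring} is stated for a uniform random \emph{copy} of $H$ (injective), not a uniform random \emph{map}, so your non-injective correction is unnecessary---but it is harmless, and the argument goes through exactly as in the paper.
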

\begin{proof}
By \cref{eq:t-coloring}, we compute that $t(\on{rb}H, \mbf{1}/r) = r^{-e(H)}(r)_{e(H)}$. This is equal to the probability that a uniform random $r$-coloring of edges of $H$ is rainbow. The total number of copies of $H$ in $K_n$ is 
\[ \frac{(n)_{v(H)}}{\left|\on{Aut}(H)\right|} = \Theta\left(n^{v(H)}\right),\]
where $\left|\on{Aut}(H)\right| = \on{hom}(H, H)$ is the number of automorphisms of $H$. By linearity of expectations and \cref{lem:assoc-coloring}, if $t(\on{rb}H, W) > r^{-e(H)}(r)_{e(H)}$, then the expected number of rainbow $H$ under $\phi\sim \mb{G}(n, W)$ is asymptotically greater than that of $\mbf{1}/r$. By the probabilistic method, there exists such a coloring $\phi$ of $E(K_n)$, so $H$ is $r$-rainbow uncommon by \cref{def:common}.
\end{proof}
We can now pass from asymptotics in $n$ to coloring graphons $W$. In \cref{lem:random}, we record this exact number of copies of rainbow $H$ of a uniform random $r$-coloring of $E(K_n)$ on expectation. 
\section{Graphon Perturbation}\label{sec:graphon}
\subsection{Some lemmas}
We prove \cref{thm:main} by constructing a graphon coloring $W$ in \cref{cor:reduction} as a perturbation of the uniform random coloring graphon $\mbf{1}/r$. To do so, we need a few lemmas.
\begin{lemma}\label{lem:f}
Define $f:[0, 1]^2\to \mb{R}$ to be $1$ on $[0, 1/2]^2\cup [1/2, 1]^2$ and $-1$ otherwise.
\begin{enumerate}
	\item For every $x, y\in [0, 1]$, $f(x, y)=f(y, x)$.
	\item For every $t\in [0, 1]$,
\[
\int_0^1 f(x, t)dx = \int_0^1 f(t, x)dx = 0.
\]
	\item For every $s\in \mb{N}$, let $x_{s+1}=x_1$. Then,
\[ \int_{[0, 1]^s} \prod_{i=1}^s f(x_i, x_{i+1})dx_1\dots dx_s =1.\]
\end{enumerate}
\end{lemma}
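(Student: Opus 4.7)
The plan is to observe that the function $f$ has a tensor decomposition that reduces everything to elementary calculations. Define the auxiliary function $g:[0,1]\to\{-1,+1\}$ by $g(x)=1$ if $x\in[0,1/2]$ and $g(x)=-1$ if $x\in(1/2,1]$. Then by case analysis on whether $x$ and $y$ lie on the same side of $1/2$, one checks immediately that
\[
f(x,y)=g(x)\,g(y).
\]
This factorization will drive all three parts.

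For part (1), symmetry is immediate from the factorization since $g(x)g(y)=g(y)g(x)$, or alternatively by directly noting that the set $[0,1/2]^2\cup[1/2,1]^2$ on which $f=1$ is invariant under $(x,y)\mapsto(y,x)$.

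For part (2), using $f(x,t)=g(x)g(t)$, the integral becomes $g(t)\int_0^1 g(x)\,dx$. The inner integral equals $\tfrac12\cdot 1+\tfrac12\cdot(-1)=0$, and the other identity follows from (1).

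Part (3) is the main observation, and it is where the factorization pays off. Substituting $f(x_i,x_{i+1})=g(x_i)g(x_{i+1})$ into the cyclic product and using that each variable $x_i$ appears exactly twice (once as a left argument and once as a right argument, because of the cyclic wrap $x_{s+1}=x_1$), we get
\[
\prod_{i=1}^s f(x_i,x_{i+1}) = \prod_{i=1}^s g(x_i)^2 = 1
\]
pointwise, since $g$ takes values in $\{\pm 1\}$. The integral over $[0,1]^s$ is therefore $1$. There is really no obstacle here; the only substantive step is spotting the factorization $f(x,y)=g(x)g(y)$, after which all three claims follow with essentially no computation.
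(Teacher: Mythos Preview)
Your proof is correct and follows essentially the same approach as the paper's: both rest on the factorization $f(x,y)=g(x)g(y)$ with $g(x)\in\{\pm1\}$ (the paper writes $Z_i$ for $g(x_i)$), so that in the cyclic product each $g(x_i)$ appears squared and the integrand is identically~$1$. Your explicit tensor identity $f=g\otimes g$ is a slightly cleaner packaging than the paper's $(-1)^{Z_i+Z_{i+1}}$ bookkeeping, but the underlying idea is identical.
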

\begin{proof}
(1) and (2) are clear. For (3), let $Z_i$ be $1$ if $x_i\in [0, 1/2]$ and $-1$ otherwise, so $f(x_i, x_{i+1}) = (-1)^{Z_i+Z_{i+1}}$. Hence, with expectation taken over independent $Z_i\sim \on{Unif}(\{-1, 1\})$, we have that
\[ \int \prod_{i=1}^s f(x_i, x_{i+1})dx_1\dots dx_s =\mb{E} \left[\prod_{i=1}^s (-1)^{Z_i+Z_{i+1}} \right]= \mb{E} \left[(-1)^{2\sum_{i=1}^s Z_i}\right]= 1.\qedhere\]
\end{proof}
Let $f$ be as in \cref{lem:f}. For any graph $G$, define
\begin{equation}\label{eq:IfG}
  \mc{I}_f(G): = \int _{[0, 1]^{v(G)}}\left(\prod_{(x_u, x_v)\in E(G)} f(x_u, x_v) \right) \prod_{v\in V(G)}dx_{v}.
\end{equation}
Then, \cref{lem:f}(3) says that $\mc{I}_f(G)=1$ if $G$ is a cycle graph.
\begin{lemma}\label{lem:IfG-leaf}
For $f$ defined in \cref{lem:f}, $\mc{I}_f(G) = 0$ if $G$ has a leaf.
\end{lemma}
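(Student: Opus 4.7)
The plan is to exploit the fact that a leaf vertex appears in exactly one edge of $G$, so the corresponding variable of integration can be isolated and handled by part (2) of \cref{lem:f}.

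More precisely, suppose $v\in V(G)$ is a leaf, with unique neighbor $u$. I would apply Fubini's theorem to split the integral defining $\mc{I}_f(G)$ by pulling out the integration over $x_v$. Since the edge $(u,v)$ is the only edge of $G$ incident to $v$, the variable $x_v$ appears only in the factor $f(x_u, x_v)$. Thus
\[
\mc{I}_f(G) = \int_{[0,1]^{v(G)-1}} \left(\prod_{\substack{(a,b)\in E(G)\\ (a,b)\ne (u,v)}} f(x_a, x_b)\right) \left(\int_0^1 f(x_u, x_v)\, dx_v\right) \prod_{w\in V(G)\setminus\{v\}} dx_w.
\]
By part (2) of \cref{lem:f}, the inner integral $\int_0^1 f(x_u, x_v)\, dx_v$ equals $0$ for every fixed value of $x_u$, so the entire expression vanishes.

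There is no real obstacle here: the argument is just Fubini plus the vanishing of the marginal of $f$. The only point to be slightly careful about is that the product of the remaining factors $f(x_a, x_b)$ is bounded (each $|f|\le 1$ and there are finitely many edges), so Fubini applies without integrability issues. This completes the proof.
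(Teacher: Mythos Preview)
Your proposal is correct and follows essentially the same argument as the paper: isolate the variable corresponding to the leaf, integrate it out first using part~(2) of \cref{lem:f}, and conclude that the whole integral vanishes. Your added remark about boundedness justifying Fubini is a nice touch but not strictly necessary here.
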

\begin{proof}
Suppose leaf $\ell$ is connected to vertex $k$, then $\int f(x_\ell, x_k)dx_\ell=0$ by \cref{lem:f}(2). Now,
\[ \mc{I}_f(G)=\int {\left(\int f(x_\ell, x_k)dx_\ell\right)} \prod_{(x_u, x_v)\in E(G)\setminus\{(x_k, x_\ell)\}} f(x_u, x_v)  \prod_{v\in V(G)\setminus \{\ell\}} dx_v = 0.\qedhere\]
\end{proof}

\begin{lemma}\label{lem:Ersk}
For every $r\ge s\ge 3$, there exists some $k:=k(r, s)\in [r-1]$ such that
\begin{equation}\label{eq:Ek}
F(r, s, k):=\sum_{i=0}^{s} (-1)^i\binom{k}{i}\binom{r-k}{s-i}k^{s-i}\left(r-k\right)^{i} >0.
\end{equation}
\end{lemma}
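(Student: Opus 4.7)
The plan is to recognize the sum as a polynomial coefficient. By the binomial theorem,
$$F(r, s, k) = [y^s]\, \phi_k(y), \qquad \phi_k(y) := (1-(r-k)y)^k(1+ky)^{r-k},$$
since expanding each factor and collecting powers of $y$ reproduces \cref{eq:Ek} term by term. A useful symmetry is $\phi_{r-k}(y) = \phi_k(-y)$, giving $F(r, s, r-k) = (-1)^s F(r, s, k)$: antisymmetric for odd $s$ (so non-vanishing of $F$ automatically yields some $F(r,s,k)>0$) but symmetric for even $s$, forcing a more careful choice of $k$. I would therefore split the argument by the parity of $s$.

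For odd $s$, I would take $k = r - 1$, so that $\phi_{r-1}(y) = (1-y)^{r-1}(1 + (r-1)y)$. A short expansion plus the identity $s\binom{r-1}{s} = (r-s)\binom{r-1}{s-1}$ gives
$$F(r, s, r-1) = (-1)^{s-1}\Bigl[(r-1)\binom{r-1}{s-1} - \binom{r-1}{s}\Bigr] = (-1)^{s-1}\binom{r-1}{s-1}\frac{r(s-1)}{s},$$
which is strictly positive for odd $s \ge 3$ and $r \ge s$.

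For even $s$, I would take $k = 2$, so that $\phi_2(y) = (1-(r-2)y)^2(1+2y)^{r-2}$. Reading off the $y^s$ coefficient yields
$$F(r, s, 2) = 2^{s-2}\Bigl[4\binom{r-2}{s} - 4(r-2)\binom{r-2}{s-1} + (r-2)^2\binom{r-2}{s-2}\Bigr].$$
After pulling out the positive factor $2^{s-2}\binom{r-2}{s-2}/[s(s-1)]$ and substituting $p := r - s \ge 0$, the bracket should reduce to a quadratic in $p$,
$$Q(p) = s(s-1)(s-2)^2 + \bigl(2s(s-2)(s-3) - 4\bigr)\,p + (s-1)(s-4)\,p^2,$$
whose constant term is positive, linear coefficient is at least $12$, and quadratic coefficient is nonnegative for even $s \ge 4$. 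Hence $Q(p) > 0$ for all $p \ge 0$, and so $F(r, s, 2) > 0$.

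The main obstacle I expect is the algebraic reduction of $F(r, s, 2)$ to the explicit quadratic $Q(p)$, which takes a few applications of standard binomial identities like $s\binom{r-2}{s} = (r-s-1)\binom{r-2}{s-1}$. It seems essential to use a parity-dependent choice of $k$: the symmetry above forces $k = r - 1$ to give the wrong sign when $s$ is even, and the ``balanced'' choice $k = r/2$ (when $r$ is even) yields $[y^s](1 - k^2 y^2)^k$, which vanishes for odd $s$ and has the wrong sign when $s \equiv 2 \pmod{4}$.
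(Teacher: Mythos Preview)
Your proof is correct and, at its core, uses the same two values of $k$ as the paper, namely $k=2$ and $k=r-1$. The organization differs: the paper splits into $s=3$ (where it takes $k=r-1$) versus $s\ge 4$ (where it takes $k=2$ regardless of parity), and in the $k=2$ case it reaches the cleaner factorization $Q(p)=(s-1)(s+p)\bigl[(s-4)p+(s-2)^2\bigr]$, from which positivity for all $s\ge 4$ is immediate without checking coefficients separately. Your generating-function identity $F(r,s,k)=[y^s](1-(r-k)y)^k(1+ky)^{r-k}$ and the resulting symmetry $F(r,s,r-k)=(-1)^sF(r,s,k)$ are genuine additions not present in the paper; they explain conceptually why a parity-dependent choice is natural, at the cost of handling odd $s\ge 5$ separately when the paper's single $k=2$ computation already covers it.
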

\begin{proof}
Let $\ell = r-s\ge 0$. For $s\ge 4$, we have that
\begin{align*}
F(r, s, 2) & = \binom{r-2}{s}2^s-2\binom{r-2}{s-1}2^{s-1}(r-2)+\binom{r-2}{s-2}2^{s-2}(r-2)^2
	\\ & = 2^{s} \binom{r-2}{s-2}\left(\frac{(r-s)(r-s-1)}{s(s-1)}-\frac{(r-s)(r-2)}{s-1}+\frac{1}{4}(r-2)^2\right)
	\\ & = 2^{s} \binom{r-2}{s-2}\left(\frac{\ell(\ell-1)}{s(s-1)}-\frac{\ell (s+\ell-2)}{s-1}+\frac{1}{4}(s+\ell -2)^2\right)
	\\ & = 2^{s} \binom{r-2}{s-2}\left(\frac{s+\ell}{4s}\right)\left((s-4)\ell+(s-2)^2\right)
	\\ & > 0.
	\end{align*}
For $s=3$, we know $r\ge 3$, so
\[ F(r, 3, r-1) = \binom{r-1}{2}(r-1)-\binom{r-1}{3} = \frac{r(r-1)(r-2)}{3} >0.\qedhere\]
\end{proof}

\subsection{Proof of \cref{thm:main}}
Fix any graph $H$ with girth $s\in\mb{N}$ and positive integer $r \ge e(H)$, so
\begin{equation}\label{eq:res3}
r\ge e(H)\ge s\ge 3.
\end{equation}
Hence, we can apply \cref{lem:Ersk} to define $k=k(r, s)\in [r-1]$. Define $\sigma:[r]\to \mb{R}$ by
\begin{equation}\label{eq:sigma}
\sigma (i) = \begin{cases}
	\frac{1}{k} & \textup{if }i\in [k]\\
	-\frac{1}{r-k} & \textup{if }i \in [r]\setminus [k]
\end{cases}.
\end{equation}
Recall $f$ from \cref{lem:f}. For $\varepsilon\in (0, 1/r]$ to be chosen later, define $W_i:[0, 1]^2\to [0, 1]$ by
\begin{equation}\label{eq:Wi}
   W_i(x, y) = \frac{1}{r}+\varepsilon \sigma(i)f(x, y)
\end{equation}
for each $i\in [r]$. Clearly, $W_i$ is measurable for each $i$. For every $x, y\in [0, 1]$, $W_i(x, y)=W_i(y, x)$ as $f(x, y)=f(y, x)$. Since $\sigma_i\in [-1, 1]$ and $f(x, y)\in [-1, 1]$, choosing any $\varepsilon\in [0, 1/r]$ satisfies $W_i(x, y)\in [0, 1]$. Hence, $W_i$ is a graphon for each $i$. Now, for any $x, y\in [0, 1]$, we compute that
\[
\sum_{i=1}^r W_i(x, y) = 1+\varepsilon f(x, y)\sum_{i=1}^r \sigma(i)=1,
\]
so $W=(W_1, \dots, W_r)$ is an $r$-coloring graphon. Recall that $\mc{H}$ is the set of injections from $E(H)$ to $[r]$. By \cref{cor:reduction}, it suffices to show that $t(\on{rb}H, W) >t(\on{rb}H, \mbf{1}/r)= r^{-e(H)}|\mc{H}|$. Noye that
\begin{align*}
t(\on{rb}H, W) & = \sum_{h\in \mc{H}}\int \prod_{(u, v)\in E(H)}\left[\frac{1}{r}+\varepsilon \sigma(h(x_u, x_v))f(x_u, x_v)\right]\prod_{v\in V(H)}dx_v.
\\ & = \sum_{h\in \mc{H}}\int \sum_{G\subset H} r^{e(G)-e(H)}\varepsilon^{e(G)}\prod_{(u, v)\in E(G)} \sigma(h(u,v ))f(x_u, x_v)\prod_{v\in V(H)}dx_v
\\ & = \sum_{G\subset H}  r^{e(G)-e(H)} \varepsilon^{e(G)}\sum_{h\in \mc{H}} \int  \prod_{(u, v)\in E(G)} \sigma(h(u, v))f(x_u, x_v)\prod_{v\in V(H)}dx_v,
\end{align*}
where in the second step we expand the first product and index the terms where we pick out the second term in the square brackets by subgraph $G\subset H$. The summand corresponding to $G=\varnothing$ is exactly $ r^{-e(H)}|\mc{H}|=r^{-e(H)}(r)_{e(H)}=t(\on{rb}H, \mbf{1}/r)$. Since $f$ does not depend on $h$ and $\sigma$ does not depend on $x_v$, we can factor out the terms to rewrite
\smallskip
\begin{align*}
& t(\on{rb}H, W) - t(\on{rb}H, \mbf{1}/r) 
\\ & = \sum_{\varnothing \neq G\subset H}  r^{e(G)-e(H)} \varepsilon^{e(G)}   \sum_{h\in \mc{H}} \prod_{(u, v)\in E(G)} \sigma(h(u, v))\left(\int \prod_{(u, v)\in E(G)}  f(x_u, x_v)\prod_{v\in V(G)}dx_v\right)
\\ & = \sum_{\varnothing \neq G\subset H}  r^{e(G)-e(H)} \varepsilon^{e(G)} \left(\int \prod_{(u, v)\in E(G)}  f(x_u, x_v)\prod_{v\in V(G)}dx_v\right) \left(\sum_{h\in \mc{H}} \prod_{(u, v)\in E(G)} \sigma(h(u, v))\right)
\\ & = \sum_{\varnothing \neq G\subset H}  r^{e(G)-e(H)} \varepsilon^{e(G)} \mc{I}_f(G)\left(\sum_{h\in \mc{H}} \prod_{(u, v)\in E(G)} \sigma(h(u, v))\right)
\end{align*}
by \cref{eq:IfG}. Now, by \cref{lem:IfG-leaf}, $\mc{I}_f(G)$ is zero unless $G\subset H$ has no leaf. Since $H$ has girth $s$, the only subgraph $G\subset H$ such that $\mc{I}_f(G)\ne 0$ and $e(G)\le s$ are those isomorphic to the cycle graph on $s$ vertices, written $G\simeq C_s$. There, $\mc{I}_f(G)=1$ by \cref{lem:f}(3).  We make the following claim.
\begin{claim}\label{claim:key}
$Q(G):=\sum_{h\in \mc{H}} \prod_{(u, v)\in E(G)} \sigma(h(u, v)) >0$ for all $G\subset H$ with $G\simeq C_s$.
\end{claim}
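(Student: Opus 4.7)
The plan is to compute $Q(G)$ in closed form, match it (up to a positive constant) with the quantity $F(r,s,k)$ controlled by \cref{lem:Ersk}, and invoke that lemma. Since $\sigma$ depends only on the color of an edge and $G$ has $s$ edges, the product $\prod_{(u,v)\in E(G)}\sigma(h(u,v))$ is determined entirely by the restriction $h|_{E(G)}$, so the sum over $h\in\mc{H}$ factors cleanly.

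First, I would partition $\mc{H}$ by $h_G := h|_{E(G)}$, which ranges over all injections $E(G)\hookrightarrow[r]$. For each such $h_G$, an extension to $h\in\mc{H}$ amounts to an injection $E(H)\setminus E(G)\to[r]\setminus h_G(E(G))$, of which there are $(r-s)_{e(H)-s}$ (a positive integer since $r\ge e(H)\ge s$). Hence
\begin{equation*}
Q(G) \;=\; (r-s)_{e(H)-s}\,\sum_{h_G\colon E(G)\hookrightarrow[r]}\prod_{e\in E(G)}\sigma(h_G(e)).
\end{equation*}

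Next, I would group the injections $h_G$ by the count $m\in\{0,1,\dots,s\}$ of edges sent into $[k]$ (so that $s-m$ edges are sent into $[r]\setminus[k]$). A direct count gives $\binom{s}{m}(k)_m(r-k)_{s-m}=s!\binom{k}{m}\binom{r-k}{s-m}$ such injections, each contributing a product of $\sigma$'s equal to $(1/k)^m(-1/(r-k))^{s-m}$. Collecting and clearing the common denominator $k^s(r-k)^s$ shows that the inner sum equals $s!/(k^s(r-k)^s)$ times the alternating sum
\begin{equation*}
\sum_{m=0}^{s}(-1)^{s-m}\binom{k}{m}\binom{r-k}{s-m}k^{s-m}(r-k)^m,
\end{equation*}
which matches $F(r,s,k)$ in \cref{lem:Ersk} up to the overall sign $(-1)^s$. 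Choosing $k=k(r,s)$ as supplied by that lemma then forces $Q(G)>0$.

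The main obstacle is the sign bookkeeping: the alternating factor $(-1)^{s-m}$ coming from the $(s-m)$ negative values of $\sigma$ must combine with the $(-1)^i$ appearing in the definition of $F(r,s,k)$ and the parity of $s$ in precisely the right way. Once those signs are tracked, the algebraic identity is immediate and \cref{lem:Ersk} concludes; the combinatorial decomposition itself is otherwise mechanical.
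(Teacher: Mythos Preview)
Your approach is exactly the paper's: restrict $h$ to $E(G)$, factor off the $(r-s)_{e(H)-s}$ extensions to $E(H)\setminus E(G)$, group the remaining injections by the number $m$ of edges landing in $[k]$, and identify the resulting alternating sum with $F(r,s,k)$ from \cref{lem:Ersk}.

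There is, however, a genuine sign gap---your final paragraph correctly flags it as the main obstacle but then waves it away. Your own computation gives
\[
Q(G)\;=\;(r-s)_{e(H)-s}\cdot\frac{s!}{k^{s}(r-k)^{s}}\cdot(-1)^{s}\,F(r,s,k),
\]
so for \emph{odd} $s$ the choice $k=k(r,s)$ from \cref{lem:Ersk} makes $Q(G)<0$, not $>0$. Concretely, for $H=C_3$ and $r=3$ the lemma selects $k=r-1=2$; every $h\in\mc{H}$ then uses each color once, $\prod\sigma=(1/2)(1/2)(-1)=-1/4$, and $Q(G)=6\cdot(-1/4)=-3/2<0$. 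The fix is immediate once seen: the substitution $i\mapsto s-i$ shows $F(r,s,r-k)=(-1)^{s}F(r,s,k)$, so for odd $s$ replace $k$ by $r-k\in[r-1]$ (equivalently, swap the two blocks in the definition of $\sigma$, or take $\varepsilon<0$). The paper's displayed identity for $Q(G)$ in the proof of \cref{claim:key} commits the same slip---it drops the factor $(-1)^{s}$ and places $k^{s}(r-k)^{s}$ in the numerator rather than the denominator---so the correction applies there as well.
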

We first show how \cref{claim:key} finishes the proof.
For all other $G\not\simeq C_s$ such that $\mc{I}_f(G)\ne 0$, $e(G) > s$. Once we fixed $r$ and $H$, everything is fixed except for $\varepsilon\in (0, 1/r]$. Then, by taking $\varepsilon$ sufficiently small, the terms with $G\simeq C_s$ dominates and is positive, i.e.
\begin{align*}
 t(\on{rb}H, W) - t(\on{rb}H, \mbf{1}/r) & = \sum_{\varnothing \neq G\subset H}  r^{e(G)-e(H)} \varepsilon^{e(G)} \mc{I}_f(G)\left(\sum_{h\in \mc{H}} \prod_{(u, v)\in E(G)} \sigma(h(u, v))\right)
 \\ & = \varepsilon^s\left(\sum_{G\subset H:G\simeq C_s} r^{s-e(H)}  Q(G)\right) + O_{\varepsilon \to 0}(\varepsilon^{s+1})
 \\ & \geq \frac{1}{2}\varepsilon^s\sum_{G\subset H:G\simeq C_s} r^{s-e(H)}  Q(G)
 \\ & > 0,
\end{align*}
where the second to last step holds by choosing sufficiently small $\varepsilon$ with respect to $r$ and $H$, and the last step holds by \cref{claim:key}. By \cref{cor:reduction}, we are done once we prove \cref{claim:key}.

\begin{proof}[Proof of \cref{claim:key}.]
Recall that $h(u, v)$ specifies the color of edge $(u, v)$. By \cref{eq:sigma}, $\sigma(h(u, v))$ depends only on whether $h(u, v)\in [k]$ or $h(u, v)\in [r]\setminus [k]$.
Suppose $h$ assigns $i$ edges of $G\simeq C_s$ to have colors in $[k]$, then it assigns colors among $[r]\setminus [k]$ to the other $s-i$ edges of $G$. For each such $h$
\[ \prod_{(u, v)\in E(G)} \sigma(h(u, v)) = \left(\frac{1}{k}\right)^i\left(-\frac{1}{r-k}\right)^{s-i}.\]

We count the number of such $h\in\mc{H}$. Since $h$ is injective, there are $\binom{k}{i}$ ways to pick out $i$ colors in $[k]$ and $\binom{r-k}{s-i}$ ways to pick out $s-i$ colors in $[r]\setminus [k]$. Then, there are $s!$ ways to assign these $s$ total colors to edges of $G$. Now, it remains to assign colors to $E(H)\setminus E(G)$. There are $(r-s)_{e(H)-e(G)}$ ways. Hence, in total, the number of such $h\in \mc{H}$ is
\[ \binom{k}{i}\binom{r-k}{s-i}\cdot s!\cdot (r-s)_{e(H)-s}.\]

Therefore, for $G\simeq C_s$, we have that
\begin{align*}
Q(G) & = \sum_{i=1}^s \left(\frac{1}{k}\right)^i\left(-\frac{1}{r-k}\right)^{s-i} \binom{r-k}{s-i}\cdot s!\cdot (r-s)_{e(H)-s}  
\\ & = k^s(r-k)^s \cdot s!\cdot (r-s)_{e(H)-s}  F(r, s, k),
\end{align*}
where we recall $F$ from \cref{eq:Ek}. By \cref{eq:res3}, the conditions of \cref{lem:Ersk} hold, so we chose $k\in [r-1]$ such that $F(r, s, k)>0$. Clearly, $k^s$, $(r-k)^s$ and $s!$ are all strictly positive. We show that falling factorial $(r-s)_{e(H)-s} >0$. This is clear combinatorially since it is the number of ways to extend $h$ from $E(G)$ to $E(H)$. More carefully, it is obvious if $r>s$. If $r=s$, then as $r\ge e(H)\ge s$ by \cref{eq:res3}, so $(r-s)_{e(H)-s}= (0)_0 = 1$. Therefore, $Q(G)>0$ for $G\simeq C_s$.
\end{proof}
This concludes the proof of \cref{thm:main}.
\bibliographystyle{amsplain0.bst}
\bibliography{main.bib}

\providecommand{\bysame}{\leavevmode\hbox to3em{\hrulefill}\thinspace}
\providecommand{\MR}{\relax\ifhmode\unskip\space\fi MR }
\providecommand{\MRhref}[2]{%
  \href{http://www.ams.org/mathscinet-getitem?mr=#1}{#2}
}
\providecommand{\href}[2]{#2}
\begin{thebibliography}{10}

\bibitem{Balogh}
J\'{o}zsef Balogh, Ping Hu, Bernard Lidick\'{y}, Florian Pfender, Jan Volec, and Michael Young, \emph{Rainbow triangles in three-colored graphs}, J. Combin. Theory Ser. B \textbf{126} (2017), 83--113.

\bibitem{desilva}
Jessica De~Silva, Xiang Si, Michael Tait, Yunus Tun\c{c}bilek, Ruifan Yang, and Michael Young, \emph{Anti-{R}amsey multiplicities}, Australas. J. Combin. \textbf{73} (2019), 357--371.

\bibitem{erdos-hajnal}
P.~Erd\H{o}s and A.~Hajnal, \emph{On {R}amsey like theorems. {P}roblems and results}, Combinatorics ({P}roc. {C}onf. {C}ombinatorial {M}ath., {M}ath. {I}nst., {O}xford, 1972), Inst. Math. Appl., Southend-on-Sea, 1972, pp.~123--140.

\bibitem{fox-eq}
Jacob Fox, Mohammad Mahdian, and Rado\v{s} Radoi\v{c}i\'{c}, \emph{Rainbow solutions to the {S}idon equation}, Discrete Math. \textbf{308} (2008), 4773--4778.

\bibitem{fox}
Jacob Fox and Fan Wei, \emph{On the local approach to sidorenko's conjecture}, Electronic Notes in Discrete Mathematics \textbf{61} (2017), 459--465, The European Conference on Combinatorics, Graph Theory and Applications.

\bibitem{lovasz}
L\'{a}szl\'{o} Lov\'{a}sz, \emph{Subgraph densities in signed graphons and the local {S}imonovits-{S}idorenko conjecture}, Electron. J. Combin. \textbf{18} (2011), Paper 127, 21.

\bibitem{lovasz-book}
L\'{a}szl\'{o} Lov\'{a}sz, \emph{Large networks and graph limits}, American Mathematical Society Colloquium Publications, vol.~60, American Mathematical Society, Providence, RI, 2012.

\bibitem{szegedy}
L\'{a}szl\'{o} Lov\'{a}sz and Bal\'{a}zs Szegedy, \emph{Szemer\'{e}di's lemma for the analyst}, Geom. Funct. Anal. \textbf{17} (2007), 252--270.

\bibitem{rodl}
Vojt\v{e}ch R\"{o}dl (ed.), \emph{Mathematics of {R}amsey theory}, Algorithms and Combinatorics, vol.~5, Springer-Verlag, Berlin, 1990.

\bibitem{Taranchuk}
Vladislav Taranchuk and Craig Timmons, \emph{The anti-{R}amsey problem for the {S}idon equation}, Discrete Math. \textbf{342} (2019), 2856--2866.

\bibitem{wong}
Thotsaporn Thanatipanonda and Elaine Wong, \emph{On the minimum number of monochromatic generalized {S}chur triples}, Electron. J. Combin. \textbf{24} (2017), Paper No. 2.20, 20.

\bibitem{gtac}
Yufei Zhao, \emph{Graph theory and additive combinatorics---exploring structure and randomness}, Cambridge University Press, Cambridge, 2023.

\end{thebibliography}

\end{document}

\begin{appendices}
\section{Iterated Blowup}\label{sec:blowup}
We provide explicit constructions to show \cref{thm:main} for the following special cases.
\begin{theorem}\label{thm:blowup-thm}
For $s\in\{4, 5\}$, the cycle graph $C_s$ on $s$ vertices is $s$-rainbow uncommon.
\end{theorem}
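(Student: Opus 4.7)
The plan is to exhibit, for each $s\in\{4,5\}$, an explicit base $s$-coloring $\phi_s\colon E(K_m)\to[s]$ on some small $K_m$ whose iterated blowup yields a rainbow $C_s$ density strictly greater than $s!/s^s=t(\on{rb}C_s,\mbf{1}/s)$. By \cref{cor:reduction} this is enough to conclude \cref{thm:blowup-thm}. I would encode the iterated blowup as a self-similar $s$-coloring graphon $W=(W_1,\dots,W_s)$: partition $[0,1]$ into $m$ equal intervals $J_1,\dots,J_m$ and require that on $J_i\times J_j$ with $i\neq j$ the component $W_{\phi_s(i,j)}$ is identically $1$ (the others $0$), while on each $J_i\times J_i$ the whole tuple $W$ is a rescaled copy of itself. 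This determines $W$ uniquely as a contraction fixed point, and since $\sum_k W_k\equiv 1$ by construction, $W$ is a valid $s$-coloring graphon.

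The heart of the argument is a self-similar recursion for $\rho:=t(\on{rb}C_s,W)$. Sample $x_1,\dots,x_s$ i.i.d.\ uniform on $[0,1]$ and condition on the set partition $\pi$ of $[s]$ recording which cycle vertices land in the same blob. The coarsest partition (all vertices in one blob) occurs with probability $1/m^{s-1}$ and, by self-similarity, contributes $\rho/m^{s-1}$. For every other $\pi$, between-block edges of the cycle receive their color from $\phi_s$ while within-block edges receive their color from independent rescaled copies of $W$; enumerating over $\pi$ (and using the cyclic symmetry of $C_s$ to collapse equivalent cases) yields a linear equation
\[
\rho \;=\; \frac{\rho}{m^{s-1}}+A(\phi_s), \qquad\text{hence}\qquad \rho\;=\;\frac{m^{s-1}}{m^{s-1}-1}\,A(\phi_s).
\]
The dominant term of $A(\phi_s)$ is the all-distinct-blobs contribution $2s\,R_{C_s}(\phi_s)/m^s$, where $R_{C_s}(\phi_s)$ counts rainbow $C_s$-subgraphs of $\phi_s$; the remaining partitions contribute correction terms involving rainbow densities of the path- and matching-like subgraphs of $C_s$ obtained by contracting each block of $\pi$.

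Finally, I would choose a highly symmetric base coloring so that $A(\phi_s)$ admits a tractable closed form---for example, a cyclic-distance coloring $\phi_s(\{i,j\})=f(|i-j|\bmod m)$ on $K_m$ with $m\in\{8,9\}$ for $s=4$ and $m\in\{10,11\}$ for $s=5$, or another structured small coloring---and then verify $\rho>s!/s^s$ by direct arithmetic. The main obstacle is twofold. First, the intermediate-partition accounting must be organized so that the arithmetic stays manageable: for $s=5$ there are $B_5=52$ partitions of a five-element set, although many collapse under the dihedral symmetry of $C_5$. Second, the base coloring $\phi_s$ must actually beat the random bound, since a generic base may not, and one has to hand-pick $\phi_s$ so that $R_{C_s}(\phi_s)$, together with the intermediate contributions, push $\rho$ past $s!/s^s$. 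The latter is the combinatorial crux and is where a careful refinement over \cite{desilva}'s technique is needed.
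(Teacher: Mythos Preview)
Your overall strategy---iterated blowup of a well-chosen base coloring---is the same as the paper's. But you overcomplicate the analysis and never actually produce the base colorings, which is the only real content of this appendix.

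The recursion you write does not close to a single linear equation in $\rho$: your quantity $A(\phi_s)$ is not a function of $\phi_s$ alone, because the intermediate-partition contributions involve densities \emph{in $W$} (for instance, densities of rainbow paths in $W$ avoiding prescribed colors), which are themselves unknowns satisfying their own recursions. You would need to solve a linear system, not one equation. This is all unnecessary. Every partition contributes a nonnegative amount to the rainbow density, so discarding all but the coarsest and finest partitions already gives
\[
\rho\Bigl(1-\tfrac{1}{m^{s-1}}\Bigr)\;\ge\;\frac{2s\,R_{C_s}(\phi_s)}{m^s},
\qquad\text{i.e.}\qquad
\rho\;\ge\;\frac{2s\,R_{C_s}(\phi_s)}{m^s-m}.
\]
This is exactly the content of \cref{prop:iterated-belowup}, phrased combinatorially rather than through a self-similar graphon. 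No ``careful refinement'' of the technique in \cite{desilva} is required; the lower bound is already enough.

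What remains is the step you leave undone: exhibiting a base coloring beating the threshold $\ell(m)>\frac{(m^s-m)(s-1)!}{2s^s}$. The paper simply does this by hand: for $(s,m)=(4,5)$ the threshold is just over $7$, and an explicit $4$-coloring of $K_5$ with $8$ rainbow $C_4$'s is written down; for $(s,m)=(5,8)$ the threshold is just over $125$, and an explicit $5$-coloring of $K_8$ with $128$ rainbow $C_5$'s is given. Your proposed cyclic-distance colorings on $m\in\{8,9,10,11\}$ may well work, but you would still have to pick $f$ and verify the count, and smaller $m$ suffices.
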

We begin by discussing the iterated blowup technique introduced in \cite{desilva}.
\begin{lemma}\label{lem:random}
Suppose each edge of $K_n$ is independently $r$-colored uniformly at random, then the expected number of rainbow copies of $H$ is
\[ \frac{(r)_{e(H)} (n)_{v(H)}}{r^{e(H)}\left|\on{Aut}(H)\right|} = (1+o_n(1))\left(\frac{(r)_{e(H)}}{r^{e(H)}\left|\on{Aut}(H)\right|}\right)n^{v(H)}.\]
\end{lemma}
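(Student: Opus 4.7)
The plan is to prove this by linearity of expectation, separating the count of copies of $H$ in $K_n$ from the probability that any given copy is rainbow.

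First I would count the number of (unlabeled) copies of $H$ in $K_n$. The number of injective maps $V(H)\hookrightarrow V(K_n)=[n]$ is the falling factorial $(n)_{v(H)}$. Each unlabeled copy of $H$ in $K_n$ arises from exactly $|\operatorname{Aut}(H)|$ such injections (by the orbit-stabilizer principle applied to the action of $\operatorname{Aut}(H)$ on injections), so the number of copies of $H$ in $K_n$ equals $(n)_{v(H)}/|\operatorname{Aut}(H)|$.

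Next I would compute the probability that a fixed copy of $H$ is rainbow under the uniform random $r$-coloring $\phi$ of $E(K_n)$. Since the colors of the $e(H)$ edges of this copy are i.i.d.\ uniform on $[r]$, the total number of color assignments to these edges is $r^{e(H)}$, while the number of assignments where all colors are distinct equals the number of injections from $E(H)$ into $[r]$, which is $(r)_{e(H)}$. Hence this probability is $(r)_{e(H)}/r^{e(H)}$; note this is zero when $r<e(H)$, as expected.

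Finally I would apply linearity of expectation: summing the indicator of "this copy is rainbow" over all copies gives the expected count
\[
\frac{(n)_{v(H)}}{|\operatorname{Aut}(H)|}\cdot\frac{(r)_{e(H)}}{r^{e(H)}} = \frac{(r)_{e(H)}(n)_{v(H)}}{r^{e(H)}|\operatorname{Aut}(H)|}.
\]
To obtain the asymptotic form on the right-hand side of the stated equation, I would use the standard estimate $(n)_{v(H)} = n(n-1)\cdots(n-v(H)+1) = (1+o_n(1))\,n^{v(H)}$ as $n\to\infty$ with $v(H)$ fixed. Since the lemma is essentially bookkeeping, there is no real obstacle; the only minor subtlety worth being explicit about is the orbit-stabilizer step identifying $|\operatorname{Aut}(H)|=\operatorname{hom}(H,H)$ (restricted to injective homomorphisms, equivalently automorphisms) as the correct overcounting factor.
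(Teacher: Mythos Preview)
Your proposal is correct and follows exactly the same approach as the paper: count copies of $H$ in $K_n$ as $(n)_{v(H)}/|\operatorname{Aut}(H)|$, compute the probability a fixed copy is rainbow as $(r)_{e(H)}/r^{e(H)}$, and multiply by linearity of expectation. The paper's proof is a two-sentence version of precisely this argument, so you have simply filled in the details.
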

\begin{proof}
The probability a fixed copy of $H$ is rainbow is 
$(r)_{e(H)}r^{-e(H)}$, and the number of copies of $H$ is 
${(n)_{v(H)}}\left|\on{Aut}(H)\right|^{-1}$. The lemma follows by linearity of expectation.
\end{proof}
The iterated blowup technique assumes some fixed coloring of $K_m$ for $m$ small and recursively constructs a coloring of a larger $K_n$. If the former contains many rainbow $H$, so will the latter.
\begin{proposition}\label{prop:iterated-belowup}
Given some $r$-coloring of $E(K_m)$ with $\ell (m)$ rainbow copies of $H$, for any $d\in \mb{N}$, there is an $r$-coloring of $E(K_n)$ with $n=m^d$ where the number of rainbow copies of $H$ is at least
\[ (1+o_n(1))\left(\frac{\ell (m)}{m^{v(H)}-m}\right)n^{v(H)}.\] 
\end{proposition}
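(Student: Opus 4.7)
The plan is to construct the coloring of $K_n$ with $n=m^d$ as the $d$-fold iterated blowup of the given $r$-coloring $\phi:E(K_m)\to[r]$. Recursively, given an $r$-coloring of $K_{m^{d-1}}$, partition $V(K_{m^d})$ into $m$ blocks $V_1,\dots,V_m$ of size $m^{d-1}$, each carrying the previously constructed coloring; then for every cross-edge with endpoints in distinct blocks $V_i$ and $V_j$, assign the color $\phi(i,j)$. I will lower bound the number $c_d$ of rainbow copies of $H$ in this blowup by restricting attention to two especially clean families of such copies.

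The first family consists of rainbow copies contained entirely in a single block $V_i$; by induction, there are $c_{d-1}$ such copies per block, contributing $m\cdot c_{d-1}$ in total. The second family consists of rainbow copies in which every vertex of $H$ lies in a distinct block. For any such copy all of its edges are cross-edges, so its color pattern equals that of the corresponding $H$-copy on the chosen blocks inside the base $K_m$ under $\phi$, and rainbow-ness transfers exactly. Thus each of the $\ell(m)$ rainbow copies in $K_m$ lifts to $(m^{d-1})^{v(H)}$ rainbow copies in $K_{m^d}$ by independently selecting a representative from each of the $v(H)$ chosen blocks; different selections give different vertex sets, hence distinct subgraph copies. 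Since the two families are disjoint for $v(H)\ge 2$, they combine to give the recursion
\[ c_d \;\ge\; m\cdot c_{d-1} + \ell(m)\cdot m^{(d-1)v(H)}, \qquad c_1=\ell(m). \]

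Unrolling and dividing by $n^{v(H)}=m^{d\,v(H)}$ yields
\[ \frac{c_d}{n^{v(H)}} \;\ge\; \ell(m)\,m^{-d(v(H)-1)-1} \;+\; \frac{\ell(m)}{m^{v(H)}}\sum_{j=0}^{d-2} m^{-j(v(H)-1)}. \]
The first summand is $o_n(1)$, while the geometric sum converges to $m^{v(H)-1}/(m^{v(H)-1}-1)$ as $d\to\infty$, so the right-hand side tends to $\ell(m)/(m\cdot(m^{v(H)-1}-1))=\ell(m)/(m^{v(H)}-m)$, which is the desired bound. The only real obstacle is verifying the counting: one must confirm that the two families are disjoint (immediate, since for $v(H)\ge 2$ a copy cannot simultaneously live in one block and span $v(H)$ distinct blocks) and that distinct block-representatives yield distinct subgraph copies (immediate, since their vertex sets differ). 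Mixed placements would produce additional rainbow copies, but they are discarded since only a lower bound is needed, and the remaining step is a routine linear recurrence.
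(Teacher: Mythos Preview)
Your proof is correct and is essentially the paper's argument phrased recursively: the paper identifies $V(K_n)$ with $[m]^d$, colors an edge by $\phi$ applied at the first coordinate where the endpoints differ, and directly counts, for each level $k$, the rainbow copies whose vertices agree on the first $k-1$ coordinates and are pairwise distinct on the $k$-th, obtaining the sum $\sum_{k=1}^d \ell(m)\, m^{k-1} (m^{d-k})^{v(H)}$. Your recursion $c_d \ge m\,c_{d-1} + \ell(m)\,m^{(d-1)v(H)}$ unrolls to exactly this same sum, so the two arguments coincide in content; only the packaging (explicit coordinates versus induction on $d$) differs.
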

\begin{proof}
Fix the vertices of the $K_m$ to be $[m]$ and let $\phi:\binom{[m]}{2}\to [r]$ be the $r$-coloring. Fix vertex set of the $K_n$ to be $[m]^d$ and define edge coloring $\psi : \binom{[m]^d}{2}\to [r]$ as follows: for any vertices $x=(x_i)_{i=1}^d$ and $y=(y_i)_{i=1}^d$, let 
$\psi(x,y)=\phi(x_k, y_k)$ where $k$ is the minimum index such that $x_k\neq y_k$.

Fix any $k\in [d]$. We count the number of rainbow $H$ whose vertices have the same first $k-1$ coordinates, but distinct $k$-th coordinate. Abuse notation and let $H$ be a fixed rainbow copy of $H$ in $\phi$. Then, for any $v_1, \dots, v_{k-1}\in [m]$ and $f:V(H)\to [m]^{d-k}$,
\[ \left\{\left(v_1, \dots, v_{k-1}, x, \left(f(x)\right)_{1}, \dots, \left(f(x)\right)_{d-k} \right):x\in V(H)\right\}\]
is a rainbow copy of $H$ under $\psi$. There are $m^{k-1}$ choices of $v_1, \dots, v_k$, $\ell(m)$ choices of $H$, and $(m^{d-k})^{v(H)}$ choices of $f$. In total, the number rainbow copies of $H$ under $\psi$ is at least
\[ \sum_{k=1}^d \ell m^{k-1}(m^{d-k})^{v(H)} = (1+o_n(1))\left(\frac{\ell(m)}{m^{v(H)}-m}\right)n^{v(H)}.\qedhere \]
\end{proof}

\begin{proof}[Proof of \cref{thm:blowup-thm}.]
By \cref{prop:iterated-belowup}, $H$ is $r$-rainbow uncommon if for any $m$ we can find an $r$-coloring of $E(K_m)$ where the numnber of rainbow copies of $H$ is at least
\[ \ell(m)> \left(m^{v(H)}-m\right)\left(\frac{(r)_{e(H)}}{r^{e(H)}\left|\on{Aut}(H)\right|}\right).\] 
For $r=s$ and $H=C_s$, we know $\left|\on{Aut}(H)\right| = 2s$, so we want 
\begin{equation}\label{eq:cs-threshold}
 \ell (m)> \frac{(m^{s}-m)s!}{s^s \cdot 2s}=\frac{(m^s-m)(s-1)!}{2s^s}.
\end{equation}
\begin{itemize}
\item For $s=4$ and $m=5$, it suffices to find a $4$-edge coloring of $K_5$ with at least 
\[ \left\lfloor\frac{(5^4-5)(3!)}{2\cdot 4^4}+1\right\rfloor = 8\]
copies of rainbow $C_4$.  We check that the coloring given on the left of \cref{fig:colors} has exactly $8$ copies. 
We represent it by its adjacency dictionary as follows: the vertices are $\{0, 1, 2, 3, 4\}$, the colors are $\{0, 1, 2, 3\}$, and ``$(i, j):c$'' means that edge $(i, j)$ has color $c$.
\[\left\{(0, 1): 3, (0, 2): 0, (1, 2): 3, (0, 3): 2, (1, 3): 1, (2, 3): 2, (0, 4): 3, (1, 4): 0, (2, 4): 3, (3, 4): 1\right\}\]
\item For $s=5$ and $m=8$, it suffices to find a $5$-edge coloring of $K_8$ with at least 
\[ \left\lfloor\frac{(8^5-8)(4!)}{2\cdot 5^5}+1\right\rfloor = 126\]
copies of rainbow $C_5$. We check that the coloring given on the right of \cref{fig:colors} has $128$ copies. We represent it by its adjacency dictionary as follows: the vertices are $\{0, 1, 2, 3, 4, 5, 6, 7\}$, the colors are $\{0, 1, 2, 3, 4\}$, and ``$(i, j):c$'' means that edge $(i, j)$ has color $c$.
\begin{align*}
& \{(0, 1): 1, (0, 2): 4, (1, 2): 1, (0, 3): 1, (1, 3): 0, (2, 3): 1, (0, 4): 0, 
\\ & (1, 4): 1, (2, 4): 0, (3, 4): 1,  (0, 5): 2, (1, 5): 3, (2, 5): 2, (3, 5): 3,
\\ &  (4, 5): 2, (0, 6): 2, (1, 6): 3, (2, 6): 2, (3, 6): 3, (4, 6): 2, (5, 6): 4, 
\\ & (0, 7): 1, (1, 7): 4, (2, 7): 1, (3, 7): 0, (4, 7): 1, (5, 7): 3, (6, 7): 3\}.
\end{align*}
\end{itemize}
The iterated blowups of these constructions prove \cref{thm:blowup-thm}.
\end{proof}
\begin{figure}[H]
\centering
\includegraphics[scale = 0.3]{c4k5.png}\qquad \qquad
\includegraphics[scale = 0.3]{c5k8.png}
\caption{The $4$-coloring of $K_5$ (left) and the $5$-coloring of $K_8$ (right).}
\label{fig:colors}
\end{figure}
\end{appendices}